\theoremstyle{plain}
\newtheorem{thm}{Theorem}[section]
\newtheorem{prop}[thm]{Proposition}
\newtheorem{lem}[thm]{Lemma}
\newtheorem{cor}[thm]{Corollary}
\newtheorem{ques}[thm]{Question}
\newtheorem{conj}[thm]{Conjecture}
\theoremstyle{definition}
\newtheorem{dfns-rems}[thm]{Definitions and Remarks}
\newtheorem{notas-rems}[thm]{Notations and Remarks}
\newtheorem{exmps-rems}[thm]{Examples and Remarks}
\begin{document}


\title[sdepth of powers of edge ideals]{On the Stanley depth of powers of edge ideals}


\author[S. A. Seyed Fakhari]{S. A. Seyed Fakhari}

\address{S. A. Seyed Fakhari, School of Mathematics, Statistics and Computer Science,
College of Science, University of Tehran, Tehran, Iran.}

\email{fakhari@khayam.ut.ac.ir}

\urladdr{http://math.ipm.ac.ir/$\sim$fakhari/}


\begin{abstract}
Let $\mathbb{K}$ be a field and $S=\mathbb{K}[x_1,\dots,x_n]$ be the
polynomial ring in $n$ variables over $\mathbb{K}$. Let $G$ be a
graph with $n$ vertices. Assume that $I=I(G)$ is the edge ideal of $G$ and $p$ is the number of its bipartite connected components. We prove that for every positive integer $k$, the inequalities ${\rm sdepth}(I^k/I^{k+1})\geq p$ and ${\rm sdepth}(S/I^k)\geq p$ hold. As a consequence, we conclude that $S/I^k$ satisfies the Stanley's inequality for every integer $k\geq n-1$. Also, it follows that $I^k/I^{k+1}$ satisfies the Stanley's inequality for every integer $k\gg 0$. Furthermore, we prove that if (i) $G$ is a non-bipartite graph, or
(ii) at least one of the connected components of $G$ is a tree with at least one edge, then $I^k$ satisfies the Stanley's inequality for every integer $k\geq n-1$.  Moreover, we verify a conjecture of the author in special cases.
\end{abstract}


\subjclass[2000]{Primary: 13C15, 05E99; Secondary: 13C13}


\keywords{Edge ideal, Stanley depth, Stanley's inequality}


\thanks{}


\maketitle


\section{Introduction} \label{sec1}

Let $\mathbb{K}$ be a field and let $S=\mathbb{K}[x_1,\dots,x_n]$
be the polynomial ring in $n$ variables over $\mathbb{K}$. Let
$M$ be a finitely generated $\mathbb{Z}^n$-graded $S$-module. Let
$u\in M$ be a homogeneous element and $Z\subseteq
\{x_1,\dots,x_n\}$. The $\mathbb {K}$-subspace $u\mathbb{K}[Z]$
generated by all elements $uv$ with $v\in \mathbb{K}[Z]$ is
called a {\it Stanley space} of dimension $|Z|$, if it is a free
$\mathbb{K}[Z]$-module. Here, as usual, $|Z|$ denotes the number
of elements of $Z$. A decomposition $\mathcal{D}$ of $M$ as a
finite direct sum of Stanley spaces is called a {\it Stanley
decomposition} of $M$. The minimum dimension of a Stanley space
in $\mathcal{D}$ is called the {\it Stanley depth} of
$\mathcal{D}$ and is denoted by ${\rm sdepth} (\mathcal {D})$.
The quantity $${\rm sdepth}(M):=\max\big\{{\rm sdepth}
(\mathcal{D})\mid \mathcal{D}\ {\rm is\ a\ Stanley\
decomposition\ of}\ M\big\}$$ is called the {\it Stanley depth}
of $M$. We say that a $\mathbb{Z}^n$-graded $S$-module $M$ satisfies {\it Stanley's inequality} if $${\rm depth}(M) \leq
{\rm sdepth}(M).$$ In fact, Stanley \cite{s} conjectured that every $\mathbb{Z}^n$-graded $S$-module satisfies Stanley's inequality.
This conjecture has been recently disproved in \cite{abcj}.
However, it is still interesting to find the classes of
$\mathbb{Z}^n$-graded $S$-modules which satisfy Stanley's inequality.
For a reader friendly introduction to Stanley depth, we refer to
\cite{psty} and for a nice survey on this topic, we refer to
\cite{h}.

Let $G$ be a graph with vertex set $V(G)=\big\{v_1, \ldots,
v_n\big\}$. The edge ideal $I(G)$ of $G$ is the ideal of $S$
generated by the squarefree  monomials  $x_ix_j$, where $\{v_i, v_j\}$ is an edge of $G$. In \cite{psy}, the authors proved that if $G$ is a forest (i.e., a graph with no cycle), then $S/I(G)^k$ satisfies the Stanley's inequality for every integer $K\gg 0$. Also, it was shown in \cite{asy} that $I(G)^k/I(G)^{k+1}$ satisfies the Stanley's inequality for every forest $G$ and every integer $K\gg 0$. The aim of this paper is to extend theses results to the whole class of graphs. In Theorem \ref{quo}, we prove that for every graph $G$, the inequality ${\rm sdepth}(S/I(G)^k)\geq p$ holds, where $p$ is the number of bipartite components of $G$. Combining this inequality with a recent result of Trung \cite{t}, we conclude that $S/I(G)^k$ satisfies the Stanley's inequality for every integer $k\geq n-1$ (see Corollary \ref{sconj1}). In Theorem \ref{twoquo}, we study the Stanley depth of $I(G)^k/I(G)^{k+1}$ and prove that ${\rm sdepth}(I(G)^k/I(G)^{k+1})\geq p$, for every integer $k\geq 0$.  Combining this inequality with a result of Herzog and Hibi \cite{hh''}, we deduce that $I(G)^k/I(G)^{k+1}$ satisfies the Stanley's inequality for large $k$ (see Corollary \ref{sconj2}).

In section \ref{sec2}, we investigate the Stanley depth of $I(G)^k$, for a positive integer $k$. In Theorem \ref{ideal}, we determine a lower bound for the Stanley depth of $I(G)^k$. In Corollaries \ref{nonb} and \ref{ctree}, we prove that if (i) $G$ is a non-bipartite graph, or
(ii) at least one of the connected components of $G$ is a tree (i.e., a connected forest) with at least one edge, then for every positive integer $k$, the Stanley depth of $I(G)^k$ is at least one more than the number of bipartite connected components of $G$. Then we conclude that for theses classes of graphs, the ideal $I(G)^k$ satisfies the Stanley's inequality, for every $k\geq n-1$, where $n=\mid V(G)\mid$ (see Corollary \ref{sin}).


\section{Stanley depth of quotient of powers of edge ideals} \label{sec1}

In this section, we study the Stanley depth of quotient of powers of edge ideals. Before starting the proofs, we remind that for every graph $G$ and every subset $W$ of $V(G)$, the graph $G\setminus W$ is the graph formed by removing
the vertices of $W$ from the vertex set of $G$ and deleting any edge in $G$ that
contains a vertex of $W$.

The first main result of this paper asserts that the number of bipartite connected components of $G$ is a lower bound for the Stanley depth of $I(G)^k/I(G)^{k+1}$, for every nonnegative integer $k$. We first need the following lemma.

\begin{lem} \label{one}
Let $G$ be a connected bipartite graph with edge ideal $I=I(G)$. Then for every integer $k\geq 0$, we have ${\rm sdepth}(I^k/I^{k+1})\geq 1$.
\end{lem}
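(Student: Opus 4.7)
The plan is to construct, for each $k\ge 0$, an explicit Stanley decomposition of $I^k/I^{k+1}$ whose blocks all have dimension at least one, by exploiting the bipartite structure of $G$ through its minimal vertex covers.

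I would first handle the case $k=0$ separately, noting that $I^0/I^1 = S/I$ is the Stanley--Reisner ring of the independence complex of $G$. Since $G$ has at least one vertex and $I$ is a proper squarefree monomial ideal distinct from $\mathfrak{m}$, one can exhibit a Stanley decomposition of $S/I$ in which the piece carrying the constant is absorbed into a block $K[x_v]$ for a chosen vertex $v$, and the remaining monomials are decomposed by iterating this idea vertex by vertex; this yields $\mathrm{sdepth}(S/I)\ge 1$.

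For $k\ge 1$, let $V(G)=A\cup B$ be the bipartition. The bipartite hypothesis enters through the classical theorem of Simis--Vasconcelos--Villarreal that ordinary and symbolic powers of $I$ coincide, giving
$$I^k \;=\; I^{(k)} \;=\; \bigcap_{C}\mathfrak{p}_C^{\,k},$$
where the intersection runs over the minimal vertex covers $C$ of $G$ and $\mathfrak{p}_C=(x_v:v\in C)$. As a consequence, a monomial $u\in S$ lies in $I^k\setminus I^{k+1}$ if and only if
$$\nu_G(u)\;:=\;\min_C\sum_{v\in C}\deg_{x_v}(u)\;=\;k.$$
For any such $u$ I would fix a minimal vertex cover $C^\ast$ realizing $\nu_G(u)=k$. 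Because every vertex of $G$ has a neighbor, $V(G)$ is never minimal, so $C^\ast\subsetneq V(G)$, and one may pick $v\notin C^\ast$. This variable is then \emph{free} for $u$: on the one hand $\deg_{C^\ast}(x_v^{\,j}u)=\deg_{C^\ast}(u)=k$ forces $\nu_G(x_v^{\,j}u)\le k$, while $u\mid x_v^{\,j}u$ forces $\nu_G(x_v^{\,j}u)\ge \nu_G(u)=k$. Hence $\nu_G(x_v^{\,j}u)=k$ for all $j\ge 0$, and $u\cdot K[x_v]$ is a one-dimensional Stanley block of $I^k/I^{k+1}$. Notice also that, since $G$ is connected bipartite with at least one edge, $A$ and $B$ are themselves minimal vertex covers, which provides a concrete supply of candidates $C^\ast$ and guarantees that free variables always exist.

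The main technical obstacle is the combinatorial bookkeeping needed to turn these pointwise one-dimensional blocks into a bona fide Stanley decomposition of $I^k/I^{k+1}$. I would organize the choice of free variable coherently by fixing a linear order on $V(G)$ and assigning, to each monomial $u\in I^k\setminus I^{k+1}$, a canonical free $x_{v(u)}$ (for instance the largest one under the order, restricted to variables compatible with some minimal cover achieving $k$); one must then verify that the chain $u,x_{v(u)}u,x_{v(u)}^{\,2}u,\dots$ is consistently assigned to a single block and that distinct blocks are disjoint and jointly exhaustive, in the Herzog--Vladoiu--Zheng spirit. The bipartite hypothesis is essential throughout because both the identity $I^k=I^{(k)}$ and the flexible supply of minimal vertex covers (notably $A$ and $B$) fail for non-bipartite $G$, which is precisely why that case will require a different approach later in the paper.
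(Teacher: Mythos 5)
Your reduction of the problem to a pointwise statement is correct and even elegant: for a connected bipartite graph one indeed has $I^k=I^{(k)}=\bigcap_C\mathfrak{p}_C^{\,k}$, a monomial $u$ lies in $I^k\setminus I^{k+1}$ exactly when $\nu_G(u)=k$, and your argument that any $v$ outside a minimizing cover $C^\ast$ satisfies $\nu_G(x_v^{\,j}u)=k$ for all $j\ge 0$ is sound. What this actually establishes is that no monomial of $I^k/I^{k+1}$ is annihilated by every variable, i.e.\ $\mathfrak{m}\notin\mathrm{Ass}(I^k/I^{k+1})$, hence ${\rm depth}(I^k/I^{k+1})\geq 1$. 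But the step you defer as ``combinatorial bookkeeping'' is precisely the hard content of the lemma, and your sketched fix does not work as stated: if you assign to each $u$ its largest admissible free variable $x_{v(u)}$, there is no reason that $v(x_{v(u)}u)=v(u)$ (multiplying by $x_{v(u)}$ can change which covers are minimizing and hence enlarge or reorder the set of free variables), so the chains $u, x_{v(u)}u, x_{v(u)}^2u,\dots$ need not be consistently grouped, and distinct rays $u\mathbb{K}[x_v]$ and $(x_vu)\mathbb{K}[x_v]$ overlap. Passing from ``every monomial has a free variable'' to an honest direct-sum Stanley decomposition with all blocks of positive dimension is exactly the nontrivial implication ${\rm depth}(M)\geq 1\Rightarrow{\rm sdepth}(M)\geq 1$, which you would need to prove or cite; as written, the proof is incomplete at its crucial point.

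The repair is a one-line citation, and it is in fact how the paper argues: by \cite[Proposition 2.13]{br}, ${\rm sdepth}(I^k/I^{k+1})\geq 1$ is equivalent to ${\rm depth}(I^k/I^{k+1})\geq 1$, so your free-variable computation (or, as in the paper, the short exact sequence $0\to I^k/I^{k+1}\to S/I^{k+1}\to S/I^k\to 0$ together with Morey's bound ${\rm depth}(S/I^k)\geq 1$ for connected bipartite $G$) finishes the proof. With that citation added, your route via symbolic powers is a legitimate and genuinely different way to get the depth bound; without it, the construction of the explicit decomposition remains a genuine gap.
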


\begin{proof}
By \cite[Proposition 2.13]{br}, it is enough to prove that for every integer $k\geq 0$, we have ${\rm depth}(I^k/I^{k+1})\geq 1$. We use induction on $k$. For $k=0$, the assertion says that ${\rm depth}(S/I)\geq 1$ which is trivial. Thus, assume that $k\geq 1$. Consider the following short exact sequence:$$0\longrightarrow I^k/I^{k+1}\longrightarrow S/I^{k+1}\longrightarrow S/I^k \longrightarrow 0.$$It follows from depth lemma that$${\rm depth}(I^k/I^{k+1})\geq \min\{{\rm depth}(S/I^{k+1}), {\rm depth}(S/I^k)+1\}\geq 1,$$where the last inequality follows from \cite[Lemma 2.6]{m}.
\end{proof}

We are now ready to prove the first main result of this paper.

\begin{thm} \label{twoquo}
Let $G$ be a graph with edge ideal $I=I(G)$. Suppose that $p$ is the number of bipartite connected components of $G$. Then for every integer $k\geq 0$, we have ${\rm sdepth}(I^k/I^{k+1})\geq p$.
\end{thm}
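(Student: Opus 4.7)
The plan is to reduce to Lemma~\ref{one} by decomposing $G$ along its connected components. Let $G_1,\dots,G_t$ be the connected components of $G$, write $V_\ell=V(G_\ell)$, $S_\ell=\mathbb{K}[V_\ell]$, and $I_\ell=I(G_\ell)\subseteq S_\ell$. Then $S=S_1\otimes_{\mathbb{K}}\cdots\otimes_{\mathbb{K}}S_t$ and $I=I_1+\cdots+I_t$, with the summands $I_\ell$ supported on pairwise disjoint sets of variables.

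The central step is to establish an isomorphism of $\mathbb{Z}^n$-graded $S$-modules
\[
I^k/I^{k+1} \;\cong\; \bigoplus_{i_1+\cdots+i_t=k}\,\bigotimes_{\ell=1}^{t}\bigl(I_\ell^{i_\ell}/I_\ell^{i_\ell+1}\bigr),
\]
with all tensor products over $\mathbb{K}$. To verify this, I would factor every monomial $u\in S$ uniquely as $u=u_1\cdots u_t$ with $u_\ell\in S_\ell$, and set $\alpha_\ell(u_\ell):=\max\{i\ge 0:\, u_\ell\in I_\ell^i\}$. Because the $I_\ell$ involve disjoint variables, one has $u\in I^k$ if and only if $\sum_\ell \alpha_\ell(u_\ell)\ge k$, so $u$ represents a nonzero class in $I^k/I^{k+1}$ precisely when $\sum_\ell \alpha_\ell(u_\ell)=k$. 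Partitioning monomials by the tuple $(\alpha_1(u_1),\dots,\alpha_t(u_t))$ yields the decomposition; each summand is $S$-stable because multiplying by any variable either preserves the tuple or strictly increases some $\alpha_\ell$, in which case the product lies in $I^{k+1}$ and is zero in the quotient.

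Next, I would invoke two standard properties of Stanley depth. First, for finitely generated $\mathbb{Z}$-graded modules $M$ over $S_1$ and $N$ over $S_2$ in disjoint variables, ${\rm sdepth}_{S_1\otimes S_2}(M\otimes_{\mathbb{K}}N)\ge {\rm sdepth}_{S_1}(M)+{\rm sdepth}_{S_2}(N)$; this follows by juxtaposing optimal Stanley decompositions $M=\bigoplus_i u_i\mathbb{K}[Z_i]$ and $N=\bigoplus_j v_j\mathbb{K}[W_j]$ to produce $\bigoplus_{i,j}(u_i\otimes v_j)\mathbb{K}[Z_i\cup W_j]$ and using that $|Z_i\cup W_j|=|Z_i|+|W_j|$. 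Iterating across the $t$ factors and applying Lemma~\ref{one} to each bipartite component (giving at least $1$) together with the trivial bound $\ge 0$ for each non-bipartite component, every nonzero summand in the decomposition has Stanley depth at least $p$. Second, ${\rm sdepth}$ of a direct sum is bounded below by the minimum of the summands' Stanley depths, so ${\rm sdepth}_S(I^k/I^{k+1})\ge p$.

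The main obstacle is the $S$-module direct-sum decomposition of $I^k/I^{k+1}$: once the combinatorics of monomial membership in the $I_\ell^{i_\ell}$ are spelled out and the $S$-action is shown to respect the tuple $(\alpha_\ell(u_\ell))_\ell$, the Stanley-depth inequalities for tensor products and direct sums combine with Lemma~\ref{one} to finish the proof.
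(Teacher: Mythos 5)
Your proposal is correct and is essentially the paper's argument: the same direct-sum decomposition of $I^k/I^{k+1}$ across components (with directness coming from the fact that mixed intersections land in $I^{k+1}$), the same juxtaposition of Stanley decompositions in disjoint variables, and the same appeal to Lemma~\ref{one} for each bipartite component. The only difference is organizational — you split all connected components simultaneously, while the paper peels off one bipartite component at a time via induction on $p$, lumping the non-bipartite components into the trivial base case.
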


\begin{proof}
Using \cite[Lemma 3.6]{hvz}, we may assume that $G$ has no isolated vertex. We prove the theorem by induction on $p$. There is nothing to prove if $p=0$. Thus, assume that $p\geq 1$ and the assertion is true for every graph with at most $p-1$ bipartite connected components. If $G$ is connected, it follows from $p\geq 1$ that $G$ is a bipartite graph and the claim follows from Lemma \ref{one}. Therefore, assume that $G$ has at least two connected components. Suppose that $G_1$ is a bipartite connected component of $G$. Let $L$ and $J$ be the edge ideals of $G_1$ and $G\setminus V(G_1)$, respectively (we consider $L$ and $J$ as ideals in $S$). Then $I=L+J$. Therefore, $I^k= \sum_{s+t=k}L^sJ^t$ and thus,
\[
\begin{array}{rl}
I^k/I^{k+1}=\sum_{s+t=k}(L^sJ^t+I^{k+1}/I^{k+1}).
\end{array} \tag{1} \label{1}
\]
On the other hand, for every distinct pairs $(s, t)\neq (l,m)$ of nonnegative integers with $s+t=l+m=k$, we have
\begin{align*}
L^sJ^t\cap L^lJ^m & = (L^s\cap J^t)\cap (L^l\cap J^m)=L^{\max\{s,l\}} \cap J^{\max\{t,m\}}\\ & =
L^{\max\{s,l\}} J^{\max\{t,m\}}\subseteq (L+J)^{k+1}=I^{k+1}.
\end{align*}
This shows that the sum in (\ref{1}) is direct and therefore by the definition of Stanley depth we have$${\rm sdepth}(I^k/I^{k+1})\geq \min_{s+t=k} \{{\rm sdepth}(L^sJ^t+I^{k+1}/I^{k+1})\}.$$Hence, it is enough to show that for every pair $(s, t)$ of nonnegative integers with $s+t=k$,$${\rm sdepth}(L^sJ^t+I^{k+1}/I^{k+1})\geq p,$$or equivalently$${\rm sdepth}(L^sJ^t/L^sJ^t\cap I^{k+1})\geq p.$$

Note that for every pair $(s, t)$ of nonnegative integers with $s+t=k$,
\begin{align*}
& L^{s+1}J^t+L^sJ^{t+1} \subseteq L^sJ^t\cap I^{k+1}=L^sJ^t\cap (L+J)^{k+1}\subseteq L^s\cap J^t\cap (L^{s+1}+ J^{t+1})=\\ & (L^s\cap J^t\cap L^{s+1})+ (L^s\cap J^t\cap J^{t+1})= (J^t\cap L^{s+1})+ (L^s\cap J^{t+1})\\ & = L^{s+1}J^t+ L^sJ^{t+1}.
\end{align*}
This shows that$$L^{s+1}J^t+L^sJ^{t+1}=L^sJ^t\cap I^{k+1}$$and thus,$$L^sJ^t/L^sJ^t\cap I^{k+1}=L^sJ^t/(L^{s+1}J^t+L^sJ^{t+1}).$$Set $S'=\mathbb{K}[x_i\mid v_i\in V(G_1)]$ and $S''=\mathbb{K}[x_i\mid v_i\notin V(G_1)]$. Also, set $L'=L\cap S'$ and $J'=J\cap S''$. By Lemma \ref{one} ${\rm sdepth}_{S'}(L'^s/L'^{s+1})\geq 1$ and by induction hypothesis ${\rm sdepth}_{S''}(J'^t/J'^{t+1})\geq p-1$. Hence, there exist Stanley decompositions $$\mathcal{D} \ : \ L'^s/L'^{s+1}=\bigoplus_{i=1}^ru_i\mathbb{K}[Z_i] \ \ \ \ \ {\rm and} \ \ \ \ \ \mathcal{D}' \ : \ J'^t/J'^{t+1}=\bigoplus_{j=1}^{r'}u'_j\mathbb{K}[Z'_j]$$with ${\rm sdepth}(\mathcal{D})\geq 1$ and ${\rm sdepth}(\mathcal{D}')\geq p-1$. One can easily check that $$L^sJ^t/(L^{s+1}J^t+L^sJ^{t+1})=\bigoplus_{i=1}^r\bigoplus_{j=1}^{r'}u_iu'_j\mathbb{K}[Z_i\cup Z'_j]$$ is a Stanley decomposition and since for every pair of integers $i$ and $j$ with $1\leq i\leq r$ and $1\leq j\leq r'$, we have $Z_i\cap Z'_j=\emptyset$, it follows that$${\rm sdepth}(L^sJ^t/L^sJ^t\cap I^{k+1})={\rm sdepth}(L^sJ^t/(L^{s+1}J^t+L^sJ^{t+1}))\geq p$$and this completes the proof.
\end{proof}

The following theorem is a consequence of Theorem \ref{twoquo}. It shows that the number of bipartite connected components of $G$ is a lower bound for the Stanley depth of $S/I(G)^k$, for every positive integer $k$.

\begin{thm} \label{quo}
Let $G$ be a graph with edge ideal $I=I(G)$. Suppose that $p$ is the number of bipartite connected components of $G$. Then for every integer $k\geq 1$, we have ${\rm sdepth}(S/I^k)\geq p$.
\end{thm}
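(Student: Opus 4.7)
The strategy is to reduce this statement to Theorem \ref{twoquo} by a short induction on $k$. A convenient observation is that the $k=0$ case of Theorem \ref{twoquo} already supplies the base of the induction: since $I^{0}=S$, the module $I^{0}/I^{1}$ coincides with $S/I$, so Theorem \ref{twoquo} gives $\mathrm{sdepth}(S/I)\geq p$, i.e. the assertion for $k=1$.

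For the inductive step I would consider the short exact sequence of $\mathbb{Z}^{n}$-graded $S$-modules
$$0\longrightarrow I^{k}/I^{k+1}\longrightarrow S/I^{k+1}\longrightarrow S/I^{k}\longrightarrow 0,$$
and combine it with the standard fact (due to Rauf) that for every short exact sequence $0\to U\to M\to N\to 0$ of $\mathbb{Z}^{n}$-graded $S$-modules one has
$$\mathrm{sdepth}(M)\geq \min\{\mathrm{sdepth}(U),\mathrm{sdepth}(N)\}.$$
By Theorem \ref{twoquo} the leftmost term has Stanley depth at least $p$, and by the induction hypothesis so does the rightmost term; hence $\mathrm{sdepth}(S/I^{k+1})\geq p$, closing the induction.

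Since all the substantive work has already been absorbed into Theorem \ref{twoquo}, I do not anticipate a genuine obstacle. The only point that requires some care is that Stanley depth is, in general, badly behaved under short exact sequences (it is not additive, and the middle term can have Stanley depth strictly larger than both outer terms). What we actually rely on is only the $\min$-inequality above, which is valid provided all three modules are regarded as $\mathbb{Z}^{n}$-graded modules over the \emph{same} ambient polynomial ring $S$, a condition that is obviously met here. If desired, the induction could instead be phrased as a single application of the same lemma to an arbitrary filtration of $S/I^{k}$ by the submodules $I^{j}/I^{k}$, but the step-by-step version is cleaner.
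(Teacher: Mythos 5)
Your proof is correct and is essentially identical to the paper's own argument: the paper also starts the induction from the $k=0$ case of Theorem \ref{twoquo} (noting $S/I = I^0/I$) and closes the inductive step by applying Rauf's $\min$-inequality \cite[Lemma 2.2]{r'} to the short exact sequence $0\to I^{k-1}/I^{k}\to S/I^{k}\to S/I^{k-1}\to 0$. No differences worth noting.
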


\begin{proof}
We use induction on $k$. For $k=1$, it follows from Theorem \ref{twoquo} that ${\rm sdepth}(S/I)={\rm sdepth}(I^0/I)\geq p$. Thus, assume that $k\geq 2$ and ${\rm sdepth}(S/I^{k-1})\geq p$. Consider the following short exact sequence:$$0\longrightarrow I^{k-1}/I^k\longrightarrow S/I^k\longrightarrow S/I^{k-1} \longrightarrow 0.$$Using \cite[Lemma 2.2]{r'}, we conclude that$${\rm sdepth}(S/I^k)\geq \min\{{\rm sdepth}(I^{k-1}/I^k), {\rm sdepth}(S/I^{k-1})\}.$$Now, Theorem \ref{twoquo} and the induction hypothesis imply that ${\rm sdepth}(S/I^k)\geq p$.
\end{proof}

Let $\mathbb{K}$ be a field and $S=\mathbb{K}[x_1,x_2,\dots,x_n]$ be the
polynomial ring in $n$ variables over the field $\mathbb{K}$, and let
$I\subset S$ be a monomial ideal. A classical result by Burch \cite{b'} states
that $$\min_k{\rm depth}(S/I^k)\leq n-\ell(I),$$ where $\ell(I)$ is the
analytic spread of $I$, that is, the dimension of $\mathcal{R}(I)/
{{\frak{m}}\mathcal{R}(I)}$, where $\mathcal{R}(I)=\bigoplus_
{n=0}^{\infty}I^n= S[It] \subseteq S[t]$ is the Rees ring of $I$ and
$\frak{m}=(x_1,\ldots,x_n)$ is the maximal ideal of $S$. By a theorem of
Brodmann \cite{b}, ${\rm depth}(S/I^k)$ is constant for large $k$. We call
this constant value the {\it limit depth} of $I$, and denote it by
$\lim_{t\rightarrow \infty}{\rm depth}(S/I^k)$. Brodmann improved the Burch's
inequality by showing that$$\lim_{k\rightarrow \infty}{\rm depth}(S/I^k)
\leq n-\ell(I).$$

Let $G$ be a graph with edge ideal $I=I(G)$. Suppose that $n$ is the number of vertices of $G$ and $p$ is the number of its bipartite connected components. It follows from \cite[Page 50]{v} that $\ell(I)=n-p$. Thus, using the Burch's inequality, we conclude that $$\lim_{k\rightarrow \infty}{\rm depth}(S/I^k)
\leq p.$$Recently, Trung \cite{t} proved that we have in fact equality in the above inequality. Indeed, he proved the following stronger result.

\begin{thm} \label{trung}
{\rm (} \cite[Theorems 4.4 and 4.6]{t} {\rm )} Let $G$ be a graph with edge ideal $I=I(G)$. Suppose that $n$ is the number of vertices of $G$ and $p$ is the number of its bipartite connected components. Then for every integer $k\geq n-1$, we have ${\rm depth}(S/I^k)=p$.
\end{thm}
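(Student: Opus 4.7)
The plan is to establish the two inequalities $\mathrm{depth}(S/I^k)\ge p$ and $\mathrm{depth}(S/I^k)\le p$ separately for every $k\ge n-1$. The lower bound I would try to obtain for all $k\ge 1$ by imitating the inductive structure used in the proof of Theorem \ref{twoquo} at the level of ordinary depth. Indeed Lemma \ref{one} already gives $\mathrm{depth}(S'/L^k)\ge 1$ for the edge ideal $L$ of a connected bipartite component, so induction on $p$ should work: choose a bipartite component $G_1$, set $L=I(G_1)$ and $J=I(G\setminus V(G_1))$ in disjoint sets of variables, and exploit the direct sum decomposition $I^k/I^{k+1}=\bigoplus_{s+t=k}L^sJ^t/(L^{s+1}J^t+L^sJ^{t+1})$ appearing in the proof of Theorem \ref{twoquo}. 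Since the two factors involve disjoint variables one can feed the depth lemma applied to the short exact sequence $0\to I^{k-1}/I^k\to S/I^k\to S/I^{k-1}\to 0$ to pass from the ${\rm depth}$ of quotients of powers to the depth of $S/I^k$ and obtain $\mathrm{depth}(S/I^k)\ge p$ for all $k\ge 1$.

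For the upper bound, I would start from the Burch--Brodmann inequality $\lim_{k\to\infty}\mathrm{depth}(S/I^k)\le n-\ell(I)=p$, which was already noted in the excerpt using Villarreal's formula $\ell(I)=n-p$. The nontrivial content is to prove that the bound already holds from $k=n-1$ onwards, not just asymptotically. The natural strategy is to exhibit an explicit monomial prime $\mathfrak p$ of height $n-p$ that lies in $\mathrm{Ass}(S/I^k)$ for every $k\ge n-1$. A canonical candidate is built component by component: for each bipartite connected component $G_i$ with bipartition $A_i\cup B_i$, select a vertex $v_i$ and include $(x_j : v_j\in V(G_i)\setminus\{v_i\})$; for each non-bipartite component include all of its variables; finally add the variables of all isolated vertices that remain (after the reduction $\mathrm{depth}$ permits one to ignore). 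This prime has height $n-p$, so showing $\mathfrak p\in\mathrm{Ass}(S/I^k)$ would force $\mathrm{depth}(S/I^k)\le n-\mathrm{ht}(\mathfrak p)=p$.

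To verify the associatedness I would reduce to the connected case by observing that $\mathrm{Ass}$ of $S/I^k$ for a disjoint union decomposes as sums of associated primes of the powers of the individual components. For a non-bipartite connected graph on $m$ vertices I would look for a monomial $u$ with $(I^k:u)=\mathfrak m$ for every $k\ge m-1$, constructed from an odd closed walk: the point is that an odd cycle produces a "Rees-type" element witnessing that $\mathfrak m$ persists in $\mathrm{Ass}(S/I^k)$ starting at an index controlled by the length of a spanning odd walk, which one can bound by $m-1$. For a bipartite connected graph on $m$ vertices one similarly produces a witness $u$ with colon equal to a height-$(m-1)$ prime, again using a walk argument. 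The main obstacle, and the reason the threshold is $k\ge n-1$ rather than some smaller quantity, is the combinatorial estimate on the length of walks needed to force persistence of the chosen associated prime; explicit construction of the colon-witness monomial and the bound on its $I$-degree is the technical heart of the argument and is where the graph structure really enters.
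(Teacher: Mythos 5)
This theorem is not proved in the paper at all: it is quoted from Trung \cite[Theorems 4.4 and 4.6]{t}, so there is no internal proof to compare against, and an attempt must essentially reproduce Trung's argument. Your lower-bound sketch ($\mathrm{depth}(S/I^k)\ge p$ via the direct-sum decomposition of $I^k/I^{k+1}$ over disjoint variable sets, a depth analogue of Lemma \ref{one}, and the depth lemma applied to $0\to I^{k-1}/I^k\to S/I^k\to S/I^{k-1}\to 0$) is reasonable and is close to how that half is actually handled.

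The upper bound, however, contains a genuine error. You propose to exhibit a monomial prime $\mathfrak{p}$ of height $n-p$ in $\mathrm{Ass}(S/I^k)$, taking all but one variable from each bipartite component. By the Simis--Vasconcelos--Villarreal theorem, the edge ideal of a bipartite graph is normally torsion-free, so for a connected bipartite component $G_i$ the associated primes of every power of $I(G_i)$ are exactly the minimal vertex covers of $G_i$; a minimal vertex cover of size $|V(G_i)|-1$ exists only when some vertex of $G_i$ is adjacent to all the others, i.e.\ only when $G_i$ is a star. For the path $P_4$, for instance, every associated prime of every power has height $2$, yet $\mathrm{depth}(S/I^k)=1$ for $k\ge 3$. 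So no associated prime of the required height exists in general, and the mechanism $\mathrm{depth}(S/I^k)\le\dim(S/\mathfrak{p})$ cannot be made to work this way: the depth genuinely drops below the minimal dimension of the associated primes. Trung's proof of the upper bound does not go through $\mathrm{Ass}$ at all; it computes local cohomology of $S/I^k$ via Takayama's formula and exhibits an explicit multidegree (built from odd closed walks in non-bipartite components and walk data in bipartite ones) in which $H^p_{\mathfrak{m}}(S/I^k)$ is nonzero once $k\ge n-1$. Finally, the walk-length estimate giving the threshold $n-1$, which you correctly identify as the technical heart, is left entirely unproved in your sketch, so even the non-bipartite case (where the component's maximal ideal does eventually become associated) is only asserted.
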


In \cite[Corollary 2.8]{psy}, the authors proved that $S/I^k$ satisfies the Stanley's inequality for every $k\gg0$, when $I$ is the edge ideal of a forest. The following corollary is an extension of this result and it is an immediate consequence of Theorems \ref{quo} and \ref{trung}.

\begin{cor} \label{sconj1}
Let $G$ be a graph with edge ideal $I=I(G)$. Suppose that $n$ is the number of vertices of $G$. Then $S/I^k$ satisfy the Stanley's inequality, for every integer $k\geq n-1$.
\end{cor}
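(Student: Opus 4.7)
The plan is simply to combine the two preceding theorems. Stanley's inequality for $S/I^k$ is the statement $\mathrm{depth}(S/I^k) \leq \mathrm{sdepth}(S/I^k)$, so the goal is to sandwich the two quantities around the common value $p$, the number of bipartite connected components of $G$.

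First I would apply Theorem \ref{trung} (Trung's result) to evaluate the left-hand side: for every $k \geq n-1$, one has exactly $\mathrm{depth}(S/I^k) = p$. Next I would apply Theorem \ref{quo}, which was proved a few lines earlier in the paper, to bound the right-hand side from below: $\mathrm{sdepth}(S/I^k) \geq p$ for every $k \geq 1$, and in particular for $k \geq n-1$.

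Chaining these two facts gives $\mathrm{depth}(S/I^k) = p \leq \mathrm{sdepth}(S/I^k)$ for $k \geq n-1$, which is exactly Stanley's inequality for $S/I^k$. There is no real obstacle to overcome; the only point to keep in mind is that the symbol $p$ in the two invoked theorems refers to the same invariant of $G$ (namely, the number of bipartite connected components), so the bounds line up without any translation. The corollary is therefore immediate, and this matches the author's billing of it as ``an immediate consequence of Theorems \ref{quo} and \ref{trung}.''
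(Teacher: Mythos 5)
Your proposal is correct and follows exactly the route the paper intends: Theorem \ref{trung} gives ${\rm depth}(S/I^k)=p$ for $k\geq n-1$, while Theorem \ref{quo} gives ${\rm sdepth}(S/I^k)\geq p$, and combining them yields Stanley's inequality. The paper states the corollary as an immediate consequence of these two theorems without writing out a proof, so your argument supplies precisely the intended reasoning.
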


In \cite[Corollary 3.2]{asy}, the authors proved that $I^k/I^{k+1}$ satisfies the Stanley's inequality for every $k\gg0$, when $I$ is the edge ideal of a forest. The following corollary is an extension of this result and shows that $I^k/I^{k+1}$ satisfies the Stanley's inequality for every edge ideal $I$ and every integer $k\gg0$. Unfortunately, we are not able to determine an upper bound for the least integer $k$, such that $I^k/I^{k+1}$ satisfies the Stanley's inequality.

\begin{cor} \label{sconj2}
Let $G$ be a graph with edge ideal $I=I(G)$. Then $I^k/I^{k+1}$ satisfies the Stanley's inequality, for every integer $k\gg 0$.
\end{cor}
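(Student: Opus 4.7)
The plan is to sandwich both ${\rm depth}(I^k/I^{k+1})$ and ${\rm sdepth}(I^k/I^{k+1})$ at the common value $p$ for $k$ sufficiently large, from which Stanley's inequality becomes immediate. Theorem \ref{twoquo} already yields the lower bound ${\rm sdepth}(I^k/I^{k+1}) \geq p$ valid for every $k \geq 0$, so the entire task reduces to producing a matching upper bound ${\rm depth}(I^k/I^{k+1}) \leq p$ for $k \gg 0$.

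For the upper bound, I would invoke the Herzog--Hibi result \cite{hh''}, which guarantees that the function $k \mapsto {\rm depth}(I^k/I^{k+1})$ is eventually constant with limiting value $n - \ell(I)$. Combined with the identity $\ell(I(G)) = n - p$ recalled in the paragraph preceding Theorem \ref{trung} (citing \cite{v}), this yields ${\rm depth}(I^k/I^{k+1}) = p$ for every sufficiently large $k$. Plugging this into the lower bound from Theorem \ref{twoquo} gives ${\rm depth}(I^k/I^{k+1}) = p \leq {\rm sdepth}(I^k/I^{k+1})$, which is precisely Stanley's inequality.

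The main obstacle — and the reason the author cannot supply an explicit threshold — is the qualitative nature of the Herzog--Hibi stabilization: it rests on limiting arguments (through the Rees algebra and the associated graded ring) that do not carry an effective bound on when stabilization begins. A more ``elementary'' alternative would be to apply the depth lemma to the short exact sequence $0 \to I^k/I^{k+1} \to S/I^{k+1} \to S/I^k \to 0$ together with Theorem \ref{trung}; however, this only produces ${\rm depth}(I^k/I^{k+1}) \geq \min\{{\rm depth}(S/I^{k+1}), {\rm depth}(S/I^k)+1\} = p$ for $k \geq n-1$, which is the wrong direction, and the other two instances of the depth lemma on the same sequence give no upper bound at all. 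Hence the nonconstructive Herzog--Hibi input is unavoidable in this approach.
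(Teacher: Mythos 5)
Your argument is correct and is essentially the paper's proof: Theorem \ref{twoquo} supplies the lower bound ${\rm sdepth}(I^k/I^{k+1})\geq p$, and the Herzog--Hibi stabilization together with $\ell(I(G))=n-p$ supplies the matching upper bound ${\rm depth}(I^k/I^{k+1})\leq p$ for $k\gg 0$. The only imprecision is attributive: \cite[Theorem 1.2]{hh''} states that $\lim_{k}{\rm depth}(I^k/I^{k+1})=\lim_{k}{\rm depth}(S/I^k)$, not that this common limit equals $n-\ell(I)$; the inequality $\lim_k{\rm depth}(S/I^k)\leq n-\ell(I)=p$ is Burch's inequality (the paper instead quotes Trung's theorem for the exact value $p$), and since only the upper bound is needed your argument goes through unchanged.
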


\begin{proof}
It follows from \cite[Theorem 1.2]{hh''} and \cite[Theorem 4.4]{t} that
$$\lim_{k\rightarrow\infty}{\rm depth}(I^k/I^{k+1})= \lim_{k\rightarrow\infty}{\rm depth}(S/I^k)=p.$$Now, Theorem \ref{twoquo} completes the proof.
\end{proof}

Let $I\subset S$ be an arbitrary ideal. An element $f \in S$ is
{\it integral} over $I$, if there exists an equation
$$f^k + c_1f^{k-1}+ \ldots + c_{k-1}f + c_k = 0 {\rm \ \ \ \ with} \ c_i\in I^i.$$
The set of elements $\overline{I}$ in $S$ which are integral over $I$ is the {\it integral closure}
of $I$. The ideal $I$ is {\it integrally closed}, if $I = \overline{I}$.

In \cite{s1}, the author proposed the following conjecture regarding the Stanley depth of integrally closed monomial ideals.

\begin{conj} \label{conje}
{\rm (}\cite[Conjecture 2.6]{s1}{\rm )} Let $I\subset S$ be an integrally closed monomial ideal. Then ${\rm sdepth}(S/I)\geq n-\ell(I)$ and ${\rm sdepth} (I)\geq n-\ell(I)+1$.
\end{conj}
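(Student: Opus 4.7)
The plan is to verify Conjecture \ref{conje} in the special case where $I$ is an integrally closed power of an edge ideal, the natural habitat for the techniques developed in this paper. Recall that every squarefree monomial ideal is automatically integrally closed, and by the theorem of Simis--Vasconcelos--Villarreal, $I(G)^k$ is integrally closed for every $k \geq 1$ precisely when $G$ is bipartite. The conjecture therefore applies to $I = I(G)$ for any graph $G$ and to $I = I(G)^k$ for any bipartite $G$ and any $k \geq 1$.

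First, I would carry out the analytic-spread bookkeeping. By \cite[Page 50]{v}, $\ell(I(G)) = n - p$, where $p$ is the number of bipartite connected components of $G$. Since the special fiber ring of $I^k$ is the $k$-th Veronese subring of that of $I$, one has $\ell(I(G)^k) = \ell(I(G)) = n - p$ for every $k \geq 1$. Consequently the conjectural bounds $n - \ell(I(G)^k)$ and $n - \ell(I(G)^k) + 1$ specialize to $p$ and $p + 1$.

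Next, the first inequality ${\rm sdepth}(S/I(G)^k) \geq p$ is exactly Theorem \ref{quo}, so the $S/I$ half of the conjecture is handled uniformly in all of the special cases above with no further work. For the second inequality ${\rm sdepth}(I(G)^k) \geq p + 1$, I would import the lower bounds developed in Section \ref{sec2}: Theorem \ref{ideal} together with Corollaries \ref{nonb} and \ref{ctree} deliver the required $p + 1$ whenever either $G$ is non-bipartite or $G$ has a connected component which is a tree with at least one edge. Combining these three steps verifies the conjecture under hypotheses (i) and (ii) of the abstract.

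The principal obstacle is the remaining case of a bipartite graph every connected component of which contains a cycle (a disjoint union of even cycles being the simplest example): there one cannot peel off a pendant vertex, and the tree-induction underlying Corollary \ref{ctree} collapses, while this is exactly the regime where $I(G)^k$ is integrally closed for all $k$ and so the conjecture has the most to say. Closing this gap would presumably require lifting the $+1$ already present for the quotient $I^k/I^{k+1}$ (Lemma \ref{one}) up to $I^k$ itself, say via the short exact sequence $0 \to I^{k+1} \to I^k \to I^k/I^{k+1} \to 0$ together with an inductive construction of a Stanley decomposition of $I^k$ that exhibits an extra free variable coming from a bipartition of $V(G)$.
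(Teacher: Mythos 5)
Your proposal matches the paper exactly: the statement is only a conjecture, and the paper likewise verifies it just in these special cases, deducing the $S/I$ half from Theorem \ref{quo} (using $\ell(I(G)^k)=n-p$) and the ideal half from Corollaries \ref{nonb} and \ref{ctree} for non-bipartite graphs or graphs with a tree component. Your identification of the remaining open case (bipartite graphs all of whose components contain cycles) agrees with the paper's Question \ref{quest}, which it also leaves unresolved.
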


This conjecture is known to be true for some classes of monomial ideals. For example, it is shown in \cite[Theorem 2.5]{psy1} that the  conjecture is true for every weakly polymatroidal ideal which is generated in a single degree. Also, in \cite[Corollary 3.4]{s2}, the author proved the conjecture for every squarefree monomial ideal which is generated in a single degree. Now, Theorem \ref{quo} shows that the conjectured inequality for $S/I$ is true, when $I$ is a power of the edge ideal of a graph.


\section{Stanley depth of powers of edge ideals} \label{sec2}

In this section we determine a lower bound for the Stanley depth of powers of edge ideals. In particular, we show that if either $G$ is a non-bipartite graph, or has a connected component which is a tree with at least one edge, then for every positive integer $k$, the Stanley depth of $I(G)^k$ is at least one more than the number of the bipartite connected components of $G$. We conclude that in theses cases $I(G)^k$ satisfies the Stanley's inequality for every $k\geq \mid V(G)\mid-1$.

\begin{thm} \label{ideal}
Let $G$ be a graph with edge ideal $I=I(G)$. Assume that $H$ is a connected component of $G$ with at least one edge. Suppose that $h$ is the number of bipartite connected components of $G\setminus V(H)$. Then for every integer $k\geq 1$, we have$${\rm sdepth}(I^k)\geq \min_{1\leq l \leq k}\{{\rm sdepth}_{S'}(I(H)^l)\}+h,$$where $S'=\mathbb{K}[x_i\mid v_i\in V(H)]$.
\end{thm}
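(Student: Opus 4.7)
The plan is to construct a filtration of $I^k$ and apply \cite[Lemma 2.2]{r'} (Rauf's short-exact-sequence inequality for Stanley depth) step by step. Set $L := I(H) \subseteq S'$, $S'' := \mathbb{K}[x_i \mid v_i \notin V(H)]$, and $J := I(G \setminus V(H)) \subseteq S''$, so that $S = S' \otimes_{\mathbb{K}} S''$ and $I = L + J$. I would define $G_a := J^a I^{k-a}$ for $a = 0, 1, \ldots, k$, obtaining a descending chain of ideals in $S$ with $G_0 = I^k$ and $G_k = J^k \cdot S$.

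The central computation is that, for each $a \in \{0, 1, \ldots, k-1\}$, the successive quotient satisfies $G_a/G_{a+1} \cong L^{k-a} \otimes_{\mathbb{K}} (J^a/J^{a+1})$ as $S$-modules. Expanding $I^{k-a} = \sum_{s+t=k-a} L^s J^t$ gives $G_a = L^{k-a} J^a + G_{a+1}$, and because $L$ and $J$ live in disjoint variable sets one has $L^r \cap J^{r'} = L^r J^{r'}$; a short calculation with this yields $L^{k-a} J^a \cap G_{a+1} = L^{k-a} J^{a+1}$. Combining a Stanley decomposition of $L^{k-a}$ realizing ${\rm sdepth}_{S'}(L^{k-a})$ with a Stanley decomposition of $J^a/J^{a+1}$ of sdepth at least $h$ (which exists by Theorem \ref{twoquo} applied to $G \setminus V(H)$, whose bipartite-component count is $h$) produces a Stanley decomposition of $G_a/G_{a+1}$ of sdepth at least ${\rm sdepth}_{S'}(L^{k-a}) + h$. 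Iterating the inequality ${\rm sdepth}(G_a) \geq \min({\rm sdepth}(G_{a+1}), {\rm sdepth}(G_a/G_{a+1}))$ down the filtration then yields
\[
{\rm sdepth}_S(I^k) \;\geq\; \min\!\Bigl({\rm sdepth}_S(G_k),\; \min_{1 \leq l \leq k} {\rm sdepth}_{S'}(L^l) + h\Bigr),
\]
and the second term on the right is already the bound we want.

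The main obstacle is the bottom piece $G_k = J^k \cdot S$, whose Stanley depth equals ${\rm sdepth}_{S''}(J^k) + |V(H)|$. Because ${\rm sdepth}_{S'}(L^l) \leq |V(H)|$ for every $l$ (a Stanley depth is bounded by the ambient dimension), it suffices to prove ${\rm sdepth}_{S''}(J^k) \geq h$, and I would handle this by induction on $|V(G)|$. The base case $V(G) = V(H)$ forces $J = 0$ and $h = 0$, so the statement is tautological. In the inductive step, if $G \setminus V(H)$ contains no edge then $J = 0$ and $G_k = 0$, so there is nothing to bound. Otherwise pick a connected component $H'$ of $G \setminus V(H)$ with at least one edge; since $|V(G \setminus V(H))| < |V(G)|$, Theorem \ref{ideal} is available by induction and gives ${\rm sdepth}_{S''}(J^k) \geq \min_{1 \leq l \leq k} {\rm sdepth}(I(H')^l) + h''$, where $h''$ is the number of bipartite components of $G \setminus (V(H) \cup V(H'))$. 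A case split shows $h'' = h - 1$ if $H'$ is bipartite and $h'' = h$ otherwise, and combined with the standard fact that any non-zero monomial ideal has positive Stanley depth (so ${\rm sdepth}(I(H')^l) \geq 1$), this forces ${\rm sdepth}_{S''}(J^k) \geq h$ in both cases. This closes the induction and the theorem follows.
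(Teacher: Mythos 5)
Your proposal is correct and follows essentially the same route as the paper: your descending chain $G_a = J^aI^{k-a}$ is exactly the paper's filtration $\sum_{t=0}^{l}L^tJ^{k-t}$ read in the opposite order, the identification $G_a/G_{a+1}\cong L^{k-a}\otimes_{\mathbb{K}}(J^a/J^{a+1})$ and the tensor-product Stanley decomposition via Theorem \ref{twoquo} are the paper's central steps, and your treatment of the bottom piece $J^kS$ (induction plus the bounds ${\rm sdepth}(I(H')^l)\geq 1$ and ${\rm sdepth}_{S'}(L^l)\leq |V(H)|$) matches the paper's, with your induction on $|V(G)|$ in place of its induction on the number of connected components. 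No substantive difference.
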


\begin{proof}
Using \cite[Lemma 3.6]{hvz}, we may assume that $G\setminus V(H)$ has no isolated vertex. We prove the theorem by induction on the number of connected components of $G$. If $G$ is connected, the $G=H$ and there is nothing to prove. Thus, assume that $G$ is not a connected graph. Set $S''=\mathbb{K}[x_i\mid v_i\notin V(H)]$. Let $L'\subset S'$ and $J' \subset S''$ be the edge ideals of $H$ and $G\setminus V(H)$, respectively. Set $L=L'S$ and $J=J'S$. Then $I=L+J$. Therefore, $I^k= \sum_{t=0}^kL^tJ^{k-t}$. For every integer $l$ with $1\leq l\leq k$, we have the following short exact sequence:$$0\longrightarrow \sum_{t=0}^{l-1}L^tJ^{k-t}\longrightarrow \sum_{t=0}^lL^tJ^{k-t}\longrightarrow \sum_{t=0}^lL^tJ^{k-t}/\sum_{t=0}^{l-1}L^tJ^{k-t} \longrightarrow 0.$$Using \cite[Lemma 2.2]{r'}, we conclude that$${\rm sdepth}(\sum_{t=0}^lL^tJ^{k-t})\geq \min\{{\rm sdepth}(\sum_{t=0}^{l-1}L^tJ^{k-t}), {\rm sdepth}(\sum_{t=0}^lL^tJ^{k-t}/\sum_{t=0}^{l-1}L^tJ^{k-t})\}.$$This implies that
\begin{align*}
{\rm sdepth}(I^k) & ={\rm sdepth}(\sum_{t=0}^kL^tJ^{k-t})\\ & \geq\min\{{\rm sdepth}(J^k), {\rm sdepth}(\sum_{t=0}^lL^tJ^{k-t}/\sum_{t=0}^{l-1}L^tJ^{k-t}): l=1, \ldots, k\}.
\end{align*}
Let $H'$ be a connected component of $G\setminus V(H)$ and set $T=\mathbb{K}[x_i\mid v_i\in V(H')]$. Notice that $G\setminus (V(H)\cup V(H'))$ has at least $h-1$ bipartite connected components. Also, note that $H'$ has at leat one edge and by \cite[Corollary 24]{h}, we have ${\rm sdepth}_{T}(I(H')^l)\geq 1$, for every positive integer $l$. Therefore, using induction hypothesis and \cite[Lemma 3.6]{hvz}, we conclude that
\begin{align*}
& {\rm sdepth}(J^k)={\rm sdepth}_{S''}(J'^k)+\mid V(H)\mid\geq \min_{1\leq l \leq k}\{{\rm sdepth}_{T}(I(H')^l)\}+(h-1)+\mid V(H)\mid\\ & \geq 1+(h-1)+\min_{1\leq l \leq k}\{{\rm sdepth}_{S'}(I(H)^l)\}=\min_{1\leq l \leq k}\{{\rm sdepth}_{S'}(I(H)^l)\}+h.
\end{align*}
Therefore, it is enough to show that for every integer $l$ with $1\leq l\leq k$, the inequality$${\rm sdepth}(\sum_{t=0}^lL^tJ^{k-t}/\sum_{t=0}^{l-1}L^tJ^{k-t})\geq \min_{1\leq l \leq k}\{{\rm sdepth}_{S'}(I(H)^l)\}+h$$holds. Notice that $$\sum_{t=0}^lL^tJ^{k-t}/\sum_{t=0}^{l-1}L^tJ^{k-t}\cong L^lJ^{k-l}/(L^lJ^{k-l}\cap \sum_{t=0}^{l-1}L^tJ^{k-t})$$and
\begin{align*}
& L^lJ^{k-l}\cap \sum_{t=0}^{l-1}L^tJ^{k-t}=\sum_{t=0}^{l-1}(L^lJ^{k-l}\cap L^tJ^{k-t})=\sum_{t=0}^{l-1}(L^l\cap J^{k-l}\cap L^t\cap J^{k-t})=\\ & \sum_{t=0}^{l-1}(L^l\cap J^{k-t})=\sum_{t=0}^{l-1}L^lJ^{k-t}=L^l\sum_{t=0}^{l-1}J^{k-t}=L^lJ^{k-l+1}.
\end{align*}
Hence,
\[
\begin{array}{rl}
\sum_{t=0}^lL^tJ^{k-t}/\sum_{t=0}^{l-1}L^tJ^{k-t}\cong L^lJ^{k-l}/L^lJ^{k-l+1}.
\end{array} \tag{2} \label{2}
\]
By Theorem \ref{twoquo} ${\rm sdepth}_{S''}(J'^{k-l}/J'^{k-l+1})\geq h$. Consider the Stanley decompositions $$\mathcal{D} \ : \ L'^l=\bigoplus_{i=1}^ru_i\mathbb{K}[Z_i] \ \ \ \ \ {\rm and} \ \ \ \ \ \mathcal{D}' \ : \ J'^{k-l}/J'^{k-l+1}=\bigoplus_{j=1}^{r'}u'_j\mathbb{K}[Z'_j]$$with ${\rm sdepth}(\mathcal{D})={\rm sdepth}(L'^l)$ and ${\rm sdepth}(\mathcal{D}')\geq h$. One can easily check that $$L^lJ^{k-l}/L^lJ^{k-l+1})=\bigoplus_{i=1}^r\bigoplus_{j=1}^{r'}u_iu'_j\mathbb{K}[Z_i\cup Z'_j]$$ is a Stanley decomposition and since for every pair of integers $i$ and $j$ with $1\leq i\leq r$ and $1\leq j\leq r'$, we have $Z_i\cap Z'_j=\emptyset$, it follows that$${\rm sdepth}(L^lJ^{k-l}/L^lJ^{k-l+1})\geq {\rm sdepth}(L'^l)+h\geq \min_{1\leq l \leq k}\{{\rm sdepth}_{S'}(I(H)^l)\}+h$$and thus, the isomorphism (\ref{2}) completes the proof.
\end{proof}

The following corollary shows that if $G$ has a non-bipartite connected component, then for every positive integer $k$, the Stanley depth of $I(G)^k$ is at least one more than the number of bipartite connected components of $G$.

\begin{cor} \label{nonb}
Let $G$ be a non-bipartite graph with edge ideal $I=I(G)$. Suppose that $p$ is the number of bipartite connected components of $G$. Then for every integer $k\geq 1$, we have ${\rm sdepth}(I^k)\geq p+1$.
\end{cor}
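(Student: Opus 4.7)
The plan is a direct application of Theorem \ref{ideal}. Since $G$ is non-bipartite, at least one connected component of $G$ is non-bipartite; I would let $H$ be such a component. Because a non-bipartite graph necessarily contains an odd cycle, $H$ has at least one edge, so the hypotheses of Theorem \ref{ideal} are satisfied.

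With this choice of $H$, the graph $G\setminus V(H)$ is the disjoint union of the remaining connected components of $G$. Since $H$ itself is non-bipartite, the bipartite connected components of $G\setminus V(H)$ coincide exactly with the $p$ bipartite connected components of $G$, so $h=p$ in the notation of Theorem \ref{ideal}. Applying that theorem with $S'=\mathbb{K}[x_i\mid v_i\in V(H)]$ yields
$${\rm sdepth}(I^k)\geq \min_{1\leq l\leq k}\{{\rm sdepth}_{S'}(I(H)^l)\}+p.$$

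To finish, I need the lower bound ${\rm sdepth}_{S'}(I(H)^l)\geq 1$ for every $l\geq 1$. Since $H$ has at least one edge, $I(H)^l$ is a nonzero monomial ideal in $S'$, and \cite[Corollary 24]{h} guarantees that any nonzero monomial ideal has Stanley depth at least $1$. This is the same fact already invoked in the proof of Theorem \ref{ideal}, so no new machinery is required. Combining the two bounds gives ${\rm sdepth}(I^k)\geq 1+p=p+1$, as claimed.

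There is essentially no obstacle here: once Theorem \ref{ideal} is available, the corollary is an immediate specialization. The only point worth verifying carefully is the book-keeping that choosing $H$ non-bipartite forces $h=p$ exactly (neither more nor less), which follows from the observation that bipartite components of a disjoint union are detected component-wise.
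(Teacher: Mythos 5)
Your proposal is correct and is exactly the argument the paper gives: choose a non-bipartite component $H$ (so that $h=p$), apply Theorem \ref{ideal}, and use \cite[Corollary 24]{h} to bound ${\rm sdepth}_{S'}(I(H)^l)\geq 1$. The paper's proof is just a terser version of the same reasoning.
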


\begin{proof} 
Note that $G$ has a non-bipartite connected component, say $H$. Thus, the assertion follows by applying Theorem \ref{ideal} and \cite[Corollary 24]{h}.
\end{proof}

In view of Corollary \ref{nonb}, we expect that the inequality ${\rm sdepth}(I(G)^k)\geq p+1$ can be true for every graph $G$ with $p$ bipartite connected components and for every positive integer $k$. In order to prove this inequality one only needs to prove it when $G$ is a connected bipartite graph (with at least one edge). Then the desired inequality follows from Theorem \ref{ideal}. Thus, one can ask the following question.

\begin{ques} \label{quest}
Let $G$ be a connected bipartite graph (with at least one edge) and suppose $k\geq 1$ is an integer. Is it true that ${\rm sdepth}(I(G)^k)\geq 2$?
\end{ques}

By \cite[Corollary 3.4]{s2} and \cite[Page 50]{v}, we know that the answer of Question \ref{quest} is positive for $k=1$. Unfortunately, we are not able to give a complete answer to Question \ref{quest}. However, we give a positive answer to this question, when $G$ is a tree. We first need to
introduce some basic notions from graph theory.

Let $G$ be a graph with vertex set $V(G)=\big\{v_1, \ldots,
v_n\big\}$. For a vertex $v_i$, the {\it neighbor set} of $v_i$ is
$N(v_i)=\big\{v_j \mid \{v_i, v_j\} \ {\rm is\ an\ edge\ of\ } G\big\}$.
The vertex $v_i$ is called a {\it leaf}  if $N(v_i)$ is a
singleton. 

\begin{prop} \label{tree}
Let $G$ be a tree with at least one edge. Then for every integer $k\geq 1$, we have ${\rm sdepth}(I(G)^k)\geq 2$.
\end{prop}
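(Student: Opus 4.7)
The plan is to induct on $n=|V(G)|$, with base $n=2$ being immediate since $I(G)^k=(x_1x_2)^k$ is principal with Stanley depth $2$. For the inductive step ($n\geq 3$), choose a leaf $v$ of $G$ with neighbor $u$, set $G'=G\setminus\{v\}$, $S'=\mathbb{K}[x_i:v_i\in V(G')]$, and $J=I(G')$. Then $G'$ is a tree with $n-1\geq 2$ vertices (hence at least one edge), so the inductive hypothesis provides ${\rm sdepth}_{S'}(J^\ell)\geq 2$ for every $\ell\geq 1$. The identity $I(G)^k=\sum_{j=0}^k x_u^jx_v^jJ^{k-j}$ lets one group the monomials of $I(G)^k$ by $x_v$-degree and obtain
\[I(G)^k=\bigoplus_{a\geq 0}x_v^aL_a,\qquad L_a:=\sum_{j=0}^{\min(a,k)}x_u^jJ^{k-j}\subseteq S',\]
where the ideals $L_0\subseteq L_1\subseteq\cdots\subseteq L_k=L_{k+1}=\cdots$ form an ascending chain in $S'$ that stabilizes at step $k$.

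The heart of the argument is to splice Stanley decompositions of the $S'$-modules $L_i/L_{i-1}$ (with $L_{-1}=0$) for $0\leq i\leq k$ into one for $I(G)^k$. If $L_i/L_{i-1}=\bigoplus_r\overline{u_r^{(i)}}\,\mathbb{K}[Z_r^{(i)}]$ and each $\overline{u_r^{(i)}}$ is lifted to a monomial $u_r^{(i)}\in L_i\setminus L_{i-1}$, then
\[I(G)^k=\bigoplus_{i=0}^k\bigoplus_r x_v^iu_r^{(i)}\mathbb{K}[Z_r^{(i)}\cup\{x_v\}]\]
is a Stanley decomposition: every monomial $x_v^af\in I(G)^k$ is uniquely assigned to the piece with $i=\min\{j:f\in L_j\}$, and then sits in a unique Stanley space of the $L_i/L_{i-1}$ decomposition. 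Consequently
\[{\rm sdepth}_S(I(G)^k)\geq\min_{0\leq i\leq k}{\rm sdepth}_{S'}(L_i/L_{i-1})+1,\]
and the claim reduces to ${\rm sdepth}_{S'}(L_i/L_{i-1})\geq 1$ for every $i$. The case $i=0$ is handled by the inductive hypothesis since $L_0=J^k$.

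For $1\leq i\leq k$, by \cite[Proposition 2.13]{br} it is enough to show that $\mathfrak{m}=(x_w:w\in V(G'))$ is not an associated prime of $L_i/L_{i-1}$. Multiplication by $x_u^i$ gives an $S'$-isomorphism $L_i/L_{i-1}\cong J^{k-i}/K_i$ with $K_i=\{n\in J^{k-i}:x_u^in\in L_{i-1}\}$. Using that $G'$ is bipartite, $J^\ell=J^{(\ell)}=\bigcap_C(x_w:w\in C)^\ell$ over minimal vertex covers $C$ of $G'$, and unwinding the condition $x_u^in\in L_{i-1}$ yields
\[K_i=\bigl\{n\in J^{k-i}:\deg_C(n)\geq k-i+1\text{ for every minimal vertex cover }C\text{ of }G'\text{ with }u\notin C\bigr\}.\]
Now if $\mathfrak{m}$ were associated, there would be a monomial $n\in J^{k-i}\setminus K_i$ with $x_wn\in K_i$ for every $w\in V(G')$; then some minimal vertex cover $C^*$ of $G'$ with $u\notin C^*$ would satisfy $\deg_{C^*}(n)=k-i$, and $x_wn\in K_i$ would force $w\in C^*$ for every $w\in V(G')$, giving $C^*=V(G')$. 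Since $G'$ has at least one edge, $V(G')$ is not a minimal vertex cover (any vertex may be deleted), contradicting the minimality of $C^*$. The main delicate point is verifying that the proposed direct sum is genuinely a Stanley decomposition of $I(G)^k$—i.e., that the pieces cover $I(G)^k$ uniquely—which is what the ascending-stabilizing chain structure of $\{L_a\}$ is engineered to deliver.
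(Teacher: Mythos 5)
Your proof is correct, and it takes a genuinely different route from the paper's. The paper runs a double induction on $n+k$: it splits $I^k=(I^k\cap S')\oplus x_1(I^k:x_1)$ at the leaf, then splits $(I^k:x_1)$ at the neighbour using Morey's identity $(I^k:x_1x_2)=I^{k-1}$ \cite[Lemma 2.10]{m} to lower $k$, and its base case $k=1$ rests on \cite[Corollary 3.4]{s2} and \cite[Page 50]{v}. You instead induct on $n$ alone, filter $I(G)^k$ by the $x_v$-degree into the ascending chain $L_0\subseteq\cdots\subseteq L_k$, and splice Stanley decompositions of the layers $L_i/L_{i-1}$, harvesting the extra $+1$ by adjoining $x_v$ to every Stanley space; the bound ${\rm sdepth}_{S'}(L_i/L_{i-1})\geq 1$ is then reduced via \cite[Proposition 2.13]{br} to showing $\mathfrak{m}$ is not an associated prime. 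I checked the two delicate points and they hold: the spliced sum is a direct cover of $I(G)^k$, because each monomial $x_v^af$ belongs to the unique layer $i=\min\{j:f\in L_j\}\leq\min(a,k)$ and freeness of the Stanley spaces of $L_i/L_{i-1}$ keeps $u_r^{(i)}g$ out of $L_{i-1}$; and your formula for $K_i$ is right, since for covers containing $u$ the membership condition $\deg_C(x_u^{i-j}m)\geq k-j$ is automatic from $m\in J^{k-i}$, and $j=i-1$ imposes the weakest constraint among $j\leq i-1$. What your route buys: no induction on $k$, no colon lemma, and no appeal to the $k=1$ case (indeed, since any nonzero monomial ideal has Stanley depth at least $1$ by \cite[Corollary 24]{h}, the layer $L_0=J^k$ needs no induction either, so your argument is nearly induction-free). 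What it costs: you must invoke the normal torsion-freeness of bipartite edge ideals (Simis--Vasconcelos--Villarreal) to write $J^\ell=\bigcap_C P_C^\ell$ over minimal vertex covers, an external input the paper never uses and which you should cite explicitly. Your splicing device is essentially the tensor construction the paper uses inside Theorems \ref{twoquo} and \ref{ideal}, adapted to a filtration in a single variable, and it improves on the short-exact-sequence bound of \cite[Lemma 2.2]{r'} by capturing the extra dimension coming from $x_v$.
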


\begin{proof}
Set $I=I(G)$. We prove the claim by induction on $n+k$, where $n$ is the number
of  vertices of $G$. If $k=1$, then the result follows from \cite[Corollary 3.4]{s2} and \cite[Page 50]{v}.
 If $n=2$, then $I^k$ is a
principal ideal and the assertion is trivially true. We therefore suppose that $k\geq 2$
and $n\geq 3$. Let $v_1$ be a leaf of $G$ and assume that
$N(v_1)=\{v_2\}$. Let
$S'=\mathbb{K}[x_2, \ldots, x_n]$ be the polynomial ring
obtained from  $S$ by deleting the variable $x_1$. Then
$$I^k=(I^k\cap S')\bigoplus x_1(I^k:x_1)$$and therefore by definition
of the Stanley depth it is enough to prove that
\begin{itemize}
\item[(i)] ${\rm sdepth}_{S'}(I^k\cap S')\geq 2$.

\item[(ii)] ${\rm sdepth}_S(I^k:x_1)\geq 2$.
\end{itemize}

To prove (i), let $I'\subseteq S'$ be the edge ideal of $G\setminus \{v_1\}$.
Then $I^k\cap S'=I'^k$. Since
$G\backslash \{v_1\}$ is a tree with $n-1$ vertices, the induction
hypothesis implies that$${\rm sdepth}_{S'}(I^k\cap S')\geq 2.$$
Next, we show that ${\rm sdepth}_S(I^k:x_1)\geq 2$.

Let $S''=\mathbb{K}[x_1, x_3, \ldots, x_n]$ be
the polynomial ring obtained from  $S$ by deleting the variable
$x_2$. Since
$$(I^k:x_1)=((I^k:x_1)\cap S'')\bigoplus
x_2(I^k:x_1x_2),$$by \cite[Lemma 2.10]{m}, we conclude that
$$(I^k:x_1)=((I^k:x_1)\cap S'')\bigoplus x_2I^{k-1}.$$Using the induction hypothesis, it follows that ${\rm sdepth}(I^{k-1})\geq 2$. Thus, to complete the proof we should show that if $(I^k:x_1)\cap S''\neq 0$, then $${\rm
sdepth}_{S''}((I^k:x_1)\cap S'')\geq 2.$$

Thus assume that $(I^k:x_1)\cap S''\neq 0$. Set $G'=G\backslash\{x_1, x_2\}$. Since $N(x_1)=\{x_2\}$, it follows$$(I^k:x_1)\cap S''=I(G')^kS''.$$Set $T={\mathbb
K}[x_3, \ldots, x_n]$. Now, \cite[Lemma 3.6]{hvz} and \cite[Corollary 24]{h} imply that $${\rm
sdepth}_{S''}((I^k:x_1)\cap S'')={\rm
sdepth}_{S''}(I(G')^kS'')={\rm sdepth}_T(I(G')^k)+1\geq 2.$$
\end{proof}

The following Corollary is a consequence of Theorem \ref{ideal} and Proposition \ref{tree}.

\begin{cor} \label{ctree}
Let $G$ be a graph with edge ideal $I=I(G)$. Assume that $G$ has $p$ bipartite connected components and suppose that at least on of the connected components of $G$ is tree with at least one edge. Then for every integer $k\geq 1$, we have ${\rm sdepth}(I^k)\geq p+1$.
\end{cor}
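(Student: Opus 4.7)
The plan is to apply Theorem \ref{ideal} with $H$ taken to be the tree component supplied by the hypothesis, and then bound the remaining minimum using Proposition \ref{tree}.

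First I would observe that any tree with at least one edge is bipartite, so the distinguished tree component $H$ is one of the $p$ bipartite components of $G$. Consequently, the number $h$ of bipartite connected components of $G\setminus V(H)$ equals $p-1$.

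Next I would invoke Theorem \ref{ideal} with this choice of $H$, which yields
$${\rm sdepth}(I^k)\ \geq\ \min_{1\leq l\leq k}\{{\rm sdepth}_{S'}(I(H)^l)\}+h\ =\ \min_{1\leq l\leq k}\{{\rm sdepth}_{S'}(I(H)^l)\}+(p-1),$$
where $S'=\mathbb{K}[x_i\mid v_i\in V(H)]$. Since $H$ is itself a tree with at least one edge, Proposition \ref{tree} applies to $H$ and gives ${\rm sdepth}_{S'}(I(H)^l)\geq 2$ for every $l\geq 1$. Substituting into the previous inequality produces ${\rm sdepth}(I^k)\geq 2+(p-1)=p+1$, which is the desired conclusion.

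There is no real obstacle here; all of the work has already been carried out in Theorem \ref{ideal} (which reduces the problem on $G$ to an estimate on a single component) and in Proposition \ref{tree} (which supplies the estimate $\geq 2$ for powers of the edge ideal of a tree). The role of this corollary is simply to combine those two results, together with the elementary observation that a tree with an edge is bipartite so that it can be used as the component $H$ in Theorem \ref{ideal} while contributing to the count $p$.
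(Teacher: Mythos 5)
Your proof is correct and follows exactly the paper's route: apply Theorem \ref{ideal} with $H$ the tree component (noting $h=p-1$ since a tree with an edge is bipartite) and bound the minimum by $2$ via Proposition \ref{tree}. The only difference is that you spell out the computation $h=p-1$, which the paper leaves implicit.
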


\begin{proof}
Apply Theorem \ref{ideal} by assuming that $H$ is the connected component of $G$ which is a tree with at least one edge. Note that by Proposition \ref{tree}, for every integer $k\geq 1$,$$\min_{1\leq l \leq k}\{{\rm sdepth}_{S'}(I(H)^l)\geq 2,$$where $S'=\mathbb{K}[x_i\mid v_i\in V(H)]$.
\end{proof}

As an immediate consequence of Theorem \ref{trung}, Corollaries \ref{nonb} and \ref{ctree}, we obtain the following result.

\begin{cor} \label{sin}
Assume that $G$ is a graph with $n$ vertices, such that
\begin{itemize}
\item[(i)] $G$ is a non-bipartite graph, or
\item[(ii)] at least one of the connected components of $G$ is a tree with at least one edge.
\end{itemize}
Then for every integer $k\geq n-1$, the ideal $I(G)^k$ satisfies the Stanley's inequality.
\end{cor}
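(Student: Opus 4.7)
The plan is to package the depth computation of Theorem \ref{trung} with the Stanley-depth lower bound of Corollaries \ref{nonb} and \ref{ctree} via the short exact sequence $0 \to I^k \to S \to S/I^k \to 0$. First I would observe that in each of cases (i) and (ii) the graph $G$ contains at least one edge: in (i) because any non-bipartite graph contains an odd cycle, and in (ii) by hypothesis. Consequently $I=I(G)$ is a nonzero proper monomial ideal of $S$, so $\dim(S/I^k)<n$ and in particular $\mathrm{depth}(S/I^k)<n=\mathrm{depth}(S)$ for every $k\geq 1$.

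Next, Theorem \ref{trung} gives $\mathrm{depth}(S/I^k)=p$ for every $k\geq n-1$. Applying the depth lemma to
$$0\longrightarrow I^k\longrightarrow S\longrightarrow S/I^k\longrightarrow 0,$$
together with the strict inequality $\mathrm{depth}(S/I^k)<\mathrm{depth}(S)$, forces the sharpened identity $\mathrm{depth}(I^k)=\mathrm{depth}(S/I^k)+1=p+1$. (The lower bound $\mathrm{depth}(I^k)\geq p+1$ is immediate from the depth lemma; the matching upper bound follows from the standard consequence that if $\mathrm{depth}(C)<\mathrm{depth}(B)$ in a short exact sequence $0\to A\to B\to C\to 0$, then $\mathrm{depth}(A)=\mathrm{depth}(C)+1$.)

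Finally, case (i) is covered by Corollary \ref{nonb} and case (ii) by Corollary \ref{ctree}, each of which supplies $\mathrm{sdepth}(I^k)\geq p+1$ for every $k\geq 1$. Chaining these estimates together yields $\mathrm{depth}(I^k)=p+1\leq \mathrm{sdepth}(I^k)$ whenever $k\geq n-1$, which is exactly Stanley's inequality for $I^k$. I do not expect any serious obstacle here, since this corollary is essentially a routine assembly of previously established results; the only delicate point is the verification that $I\neq 0$, which is precisely what allows the exact identity $\mathrm{depth}(I^k)=\mathrm{depth}(S/I^k)+1$ rather than the weaker generic inequality.
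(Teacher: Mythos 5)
Your argument is correct and is precisely the assembly the paper intends: Theorem \ref{trung} gives $\mathrm{depth}(S/I^k)=p$ for $k\geq n-1$, the standard identity $\mathrm{depth}(I^k)=\mathrm{depth}(S/I^k)+1$ (valid since $I\neq 0$ in both cases) gives $\mathrm{depth}(I^k)=p+1$, and Corollaries \ref{nonb} and \ref{ctree} supply $\mathrm{sdepth}(I^k)\geq p+1$. The paper states the corollary as an immediate consequence without writing out these steps, and your proposal fills them in correctly.
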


We remind that Conjecture \ref{conje} predicts that $n-\ell(I)+1$ is a lower bound for the Stanley depth of $I$, when $I$ is an integrally closed monomial ideal of $S$. Note that by Corollaries \ref{nonb} and \ref{ctree}, the conjectured inequality is true, when $I$ is a power of the edge ideal of those graphs which belong to the classes (i) and (ii) of the above corollary.




\end{document}